\def\r{\mathbb R}
\def\s{\mathbb S}
\def\h{\mathbb H}
\newtheorem{theorem}{Theorem}[section]
\theoremstyle{definition}
\newtheorem{definition}[theorem]{Definition}
\newtheorem{example}[theorem]{Example}
\newtheorem{remark}[theorem]{Remark}
\begin{document}

\title[Anti-torqued slant helices and Torqued Curves]{Anti-torqued slant helices and Torqued Curves in Riemannian manifolds}

\author{Muhittin Evren Aydin$^1$}
\address{$^1$Department of Mathematics, Faculty of Science, Firat University, Elazig,  23200 Turkey
\newline
ORCID: 0000-0001-9337-8165}
\email{meaydin@firat.edu.tr}
\author{ Adela Mihai$^2$}
 \address{$^2$Technical University of Civil Engineering Bucharest,
Department of Mathematics and Computer Science, 020396, Bucharest, Romania
and Transilvania University of Bra\c{s}ov, Interdisciplinary Doctoral
School, 500036, Bra\c{s}ov, Romania
\newline
ORCID: 0000-0003-2033-8394}

 \email{adela.mihai@utcb.ro, adela.mihai@unitbv.ro}
\author{ Cihan Özgür$^3$}
 \address{$^3$Department of Mathematics, Izmir Democracy University, 35140, Karabaglar, Izmir, Turkey
 \newline
ORCID: 0000-0002-4579-7151}
 \email{cihan.ozgur@idu.edu.tr}

\keywords{Anti-torqued vector field, torqued vector field, slant helix, general helix, Riemannian manifold}
\subjclass{53B20; 53A35}
\begin{abstract}
In this paper, we introduce the notion of an anti-torqued slant helix in a Riemannian manifold, defined as a curve whose principal vector field makes a constant angle with an anti-torqued vector field globally defined on the ambient manifold. We characterize and classify such curves through systems of differential equations involving their invariants. Several illustrative examples are also provided. Finally, we study torqued curves, defined as curves for which the inner product function of the principal vector field and a torqued vector field along the curve is a given constant.
\end{abstract}
\maketitle

\section{Introduction} \label{intro}

Let $M$ be a submanifold of a Riemannian manifold $\widetilde{M}$. Denote by $T_pM$ and $N_pM$ the tangent and the normal spaces of $M$ in $\widetilde{M}$ at some point $p\in M$, respectively. The {\it first normal space} of $M$ at $p\in M$ is a subspace of $N_pM$ defined by
$$
\mbox{Im }h_p =\text{Span}\{h(X,Y): X,Y \in T_pM \},
$$
where $h$ is the second fundamental form of $M$ in $\widetilde{M}$.

Let $V$ be a nowhere-zero vector field on  $\widetilde{M}$. Chen \cite{crs0,crs1,crs3} defined $M$ as a {\it rectifying submanifold} in $\widetilde{M}$ with respect to $V$ if the normal component $V^{\perp} \neq 0$ on $M$ and
\begin{equation} \label{rs}
\langle V(p),\mbox{Im } h_p\rangle=0, \quad \forall p\in M,
\end{equation}
where $\langle , \rangle$ is the metric tensor on $\widetilde{M}$. Chen \cite{crs0, crs1}, in collaboration with Oh \cite{crs2}, completely classified these submanifolds in both the Euclidean and pseudo-Euclidean cases. Moreover, these submanifolds were studied in the multiplicative Euclidean space \cite{ahy}.

Let $\widetilde{M}$ denote one of the complete simply-connected models of Riemannian manifolds with constant sectional curvature and, let $V$ be a concircular vector field on $\widetilde{M}$. Recently, Lucas and Ortega-Yag\"ues \cite{loy4,loy5} defined $M$ as a {\it concircular manifold} in $\widetilde{M}$ with axis $V$ if the function $\theta:=\langle V, N\rangle$ remains constant on $M$, where $N(p) \in  \mbox{Im }h_p$, $p\in M$, is any unitary vector field.

With this definition, we have
$$
\langle V,h(X,Y)\rangle=|h(X,Y)|\theta ,\quad X,Y \in\mathfrak{X}(M).
$$
Since $h$ is a $(2,1)$-tensor field, if $\theta \neq 0$, then $M$ must be totally geodesic in $\widetilde{M}$ whenever both the dimension and the codimension of $M$ are greater than $1$. Otherwise, i.e. $\theta =0$, this notion coincides with the definition of a rectifying submanifold. However, if $M$ is a curve or a hypersurface in $\widetilde{M}$, then $\theta$ can be an arbitrary constant, as the first normal space of $M$ is of one-dimensional. In other words, concircular manifolds can only be either concircular curves or concircular hypersurfaces or rectifying submanifolds.

The authors in \cite{loy4,loy5} classified concircular curves and hypersurfaces in $\widetilde{M}$. Concircular curves are also known as {\it concircular helices}. When the axis is a torqued vector field or a concircular vector field, rectifying submanifolds were characterized in \cite{crs3}. Another classification of rectifying submanifolds in Riemannian manifolds with an anti-torqued axis was done by the authors of the present paper \cite{amc1}.

Consider a regular curve $\gamma:I\subset \r\to \r^m$ given by $\gamma=\gamma(s)$, where $s$ is the arc-length parameter. Recall that a concircular vector field $V$ on $\r^m$ is either parallel or is given by (see \cite{cbook})
$$
x \mapsto V(x)=\rho x+\vec{v}, \quad \rho\in \r, \quad \vec{v}\in \r^m.
$$
If $\gamma$ is a concircular helix, then it follows that
$$
\langle \rho \gamma(s)+\vec{v} ,N^\gamma(s) \rangle = \theta, \quad \forall s \in I,
$$
where $N^\gamma(s)$ is the unit principal normal vector of $\gamma$, and $\theta \in \r$. A concircular helix is said to be {\it proper} if both $\rho$ and $\theta$ are nonzero.

Depending on the choice of the constants $\rho$ and $\theta$, concircular helices reduce to well-known classes of curves. More precisely, if the concircular factor $\rho$ is zero, corresponding to the case that $V$ is a parallel vector field, then $\gamma$ belongs to the family of helices \cite{bar,it,loy2,loy3}. On the other hand, if the constant $\theta$ is zero for every $s\in I$, then $\gamma$ is congruent to a rectifying curve (see \cite{cgb}). Rectifying curves, introduced by Chen in 2003 (see \cite{crc0}), have been a topic of great interest. For example, see \cite{cgb,crc1, crc2, dca, inpt, in, jamr, loy0, loy1}.

Similar to the concept of concircular helices, Ateş et al. \cite{agy} introduced $N$-slant curves in Sasakian 3-manifolds, defined as curves whose principal normal vector field makes a constant angle with the Reeb vector field. The geometric properties of these curves were studied in relation to the mean curvature vector field.

We also wish to highlight some recent studies on curves and surfaces in arbitrary Riemannian manifolds. In \cite{dfds}, da Silva et al. characterized and classified curves and surfaces in arbitrary 3-dimensional Riemannian manifolds that make a constant angle with a parallel transported vector field, that is, a vector field $V$ satisfying $\widetilde{\nabla}_XV=0$, for every tangent $X$ to the submanifolds. These curves may have interdisciplinary applications, for example in roads design, a study already started in \cite{bm}. In another work, Çalışkan and Şahin \cite{casn} characterized slant helices in $m$-dimensional Riemannian manifolds by means of differential equations. Moreover, the authors in \cite{casn} provided criteria under which circles along immersed submanifolds can be characterized as slant helices, and conversely.

Motivated by these studies, we introduce the following:

\begin{definition} \label{def}
Let $\gamma=\gamma(s)$, $s\in I$, be a Frenet curve of order $r$, $1 \leq r \leq m$, in a Riemannian manifold $\widetilde{M}$ ($\mbox{dim}(\widetilde{M})=m$) endowed with an anti-torqued vector field $V$. The curve $\gamma$ is called an {\it anti-torqued slant helix} if
$$
\langle V|_{\gamma(s)} ,N^\gamma (s) \rangle =\cos \theta, \quad \forall s \in I, \quad \theta \in [0,2\pi],
$$
where $V$ is referred to be the {\it axis} of $\gamma$, and $N^\gamma (s)$ is the unit principal vector field of $\gamma$.
\end{definition}

Moreover, we study curves in $\widetilde{M}$ for which the function $\langle V|_{\gamma(s)} ,N(s) \rangle = \theta \in \r$ is a given constant, where $V$ is a torqued vector field globally defined on $\widetilde{M}$. Following \cite{loy4, loy5}, we call these curves {\it torqued curves}.

Special vector fields play an important role in the study of geometry and topology of the ambient spaces and are of great interest (see \cite{bo,bo1,cs,c00,cv,dmtv,dta,id,ya0,ymy}). Moreover they also have some applications in physics \cite{c0,dr,mam0,mam1,mam3}.

This paper is organized as follows: In Section \ref{pre}, the fundamental invariants of curves on Riemannian manifolds are given, and in Section \ref{tors}, torse-forming vector fields are revisited with examples. In Section \ref{anti-helices}, anti-torqued slant helices in Riemannian manifolds are characterized and classified (Theorems \ref{anti-slant}, \ref{anti-slant-3d}, and \ref{anti-slant2}). Non-trivial examples are presented (Examples \ref{exm-zero-tors} and \ref{exm-nozero-tors}). Finally we characterize torqued curves in Section \ref{torq-curv} (Theorems \ref{tor-slant} and \ref{conc}).

\section{Preliminaries} \label{pre}

Let $\gamma=\gamma(t)$, $t\in I$ be a regular curve in an $m$-dimensional Riemannian manifold $\widetilde{M}$, and let $T^\gamma$ be its tangent vector field. The curve $\gamma$ is said to be in {\it general position up to the order} $1\leq r \leq m$ (see \cite{lmm}) if the set
$$
\{T^\gamma,\widetilde{\nabla}_{T^\gamma}T^\gamma,...,\widetilde{\nabla}_{T^\gamma}^{r-1}T^\gamma\}
$$
is linearly independent for every $s\in I$, where $\widetilde{\nabla}$ is a linear connection. In addition, when $\gamma$ is in general position up to the order $1\leq r \leq m$ and $\widetilde{\nabla}$ is the Levi-Civita connection, then it is called a {\it Frenet curve of order} $r$.

Assume that $\gamma$ is a Frenet curve of order $m$ and that $\widetilde{M}$ is an oriented connected $m$-dimensional Riemannian manifold. Then there exists a unique set $\{N_1^\gamma,...,N_m^\gamma\}$ of smooth vector fields along $\gamma$, and smooth functions $\kappa_j:I\to \r$, with $\kappa_j >0$ and $0\leq j \leq m-1$, such that
\begin{enumerate}
\item $(N_1^\gamma,...,N_m^\gamma)$ is a positively oriented orthonormal frame along $\gamma$,

\item $(N_1^\gamma,...,N_m^\gamma)$ and $
(T^\gamma,\widetilde{\nabla}_{T^\gamma}T^\gamma,...,\widetilde{\nabla}_{T^\gamma}^{r-1}T^\gamma)
$ span the same vector subspace,

\item Frenet formulas hold:
\begin{enumerate}
\item $T^\gamma=\kappa_0 N_1^\gamma$,

\item $\widetilde{\nabla}_{N_1^\gamma}N_1^\gamma =\kappa_1 N_2^\gamma$,

\item $\widetilde{\nabla}_{N_1^\gamma}N_i^\gamma =-\kappa_{i-1} N_{i-1}^\gamma +\kappa_{i} N_{i+1}^\gamma$, $2\leq i \leq m-1$,

\item $\widetilde{\nabla}_{N_1^\gamma}N_m^\gamma =-\kappa_{m-1} N_{m-1}^\gamma$.
\end{enumerate}
\end{enumerate}

We call $(N_1^\gamma,...,N_m^\gamma)$ the {\it Frenet frame} of $\gamma$, and the functions $\kappa_i$, for $0\leq j \leq m-1$, the {\it curvatures} of $\gamma$.

If $\gamma \subset \widetilde{M}$ is parametrized by arc-length, then the curvature $\kappa_0 $ is identically equal to one. The curve $\gamma$ is called a {\it geodesic} in $\widetilde{M}$ if $\kappa_1 \equiv 0$. Notice also that if $r=1$, then $\gamma$ is said to be {\it regular}; if $r=2$, then $\gamma$ is said to {\it non-geodesic}; and if $r=3$, then $\gamma$ has nonzero second curvature.

Moreover, $\gamma$ is called a {\it circle} in $\widetilde{M}$ if it is of order $2$ and if there is a constant $R>0$ such that
\begin{eqnarray*}
\widetilde{\nabla}_{T^\gamma}T^\gamma&=&R N_2^\gamma,
\\
\widetilde{\nabla}_{T^\gamma}N_2^\gamma&=&-R T^\gamma.
\end{eqnarray*}
The constant $1/R$ is said to be {\it radius} of the circle $\gamma$ \cite{ny}.

\section{Anti-Torqued and Torqued Vector Fields} \label{tors}

Let $\widetilde{\nabla}$ be Levi-Civita connection on a Riemannian manifold $\widetilde{M}$. Denote by $\omega$ a $1$-form on $\widetilde{M}$, and by $\rho$ a smooth function on $\widetilde{M}$. The notion of a {\it torse-forming} vector field $V$ on $\widetilde{M}$ was introduced by Yano \cite{ya0} as
\begin{equation*}
\widetilde{\nabla}_X V=\rho X +  \omega(X) V, \quad \forall X\in \mathfrak{X}(\widetilde{M}).
\end{equation*}

We refer to $\omega$ and $\rho$ as the {\it generating form} and the {\it conformal scalar} (or {\it potential function}), respectively. Let $W$ be the vector field dual to $\omega$ on $\widetilde{M}$, i.e., $\langle W, X \rangle =\omega (X)$ for every $X \in \mathfrak{X}(M)$. We say that $W$ is {\it generative} of $V$ (see \cite{bo,mih0}).

A torse-forming vector field is called:
\begin{enumerate}
\item {\it Concircular} if $\omega$ is identically zero on $\widetilde{M}$ \cite{ya1},

\item {\it Recurrent} if $\rho$ is identically zero on $\widetilde{M}$ \cite{cv},

\item {\it Torqued} if $\langle V, W\rangle =0$ on $\widetilde{M}$ \cite{c01,crs3},

\item {\it Anti-torqued} if $W=-\rho V$ on $\widetilde{M}$ \cite{cs,dan,na}.
\end{enumerate}

A torqued (or anti-torqued) vector field is said to be {\it proper} if the generating form and conformal scalar are nowhere zero. This means that there is no an open subset such that $\omega$ and $\rho$ are identically zero. There are certain differences between torqued and anti-torqued vector fields on Riemannian manifolds. For example, a proper torqued vector field on a Riemannian manifold is never of constant length, whereas an anti-torqued vector field is always a unit geodesic vector field \cite{amc}. Furthermore, there is no torqued vector field that is also a gradient vector field, although there are examples of the other case \cite{crs3, dan}.

Non-trivial examples of such vector fields exist in various ambient spaces.

\begin{example}[Torqued vector fields]
Let $\widetilde{M}=I\times_\lambda M$ be a twisted product, where $I\subset \r$ is an open interval, $M$ is a Riemannian manifold, and $\lambda$ is the twisting function. Then, $\lambda\mu\frac{\partial}{\partial s}$ is a torqued vector field on $\widetilde{M}$, where $s$ is the arc-length parameter of $I$, and $\mu$ is a nonzero function on $M$ \cite{c01}.
\end{example}

\begin{example}[Anti-torqued vector fields] \label{anti-torq-example}

\begin{enumerate}
\item Let $\widetilde{M}=\s^1\times_\lambda M$ be a warped product, where $\s^1$ is the unit circle, $M$ is a Riemannian manifold, and $\lambda>0$ is the warping function on $\s^1$. Then, a globally defined unit vector field on $\s^1$ is an anti-torqued vector field \cite{dan}.

\item  Let $(x_1,...,x_m)$ be the canonical coordinates of $\r^n$. The upper half space model of the hyperbolic space is $\h^m=\{(x_1,...,x_m) \in \r^m : x_m>0\},$ and is endowed with the metric
$$
\langle ,\rangle =\frac{1}{x_m^2}\sum_{i=1}^{m}dx_i^2.
$$
In this model of $\h^m$, an orthonormal basis $\{e_1,...,e_n \}$ can be chosen as
$$
e_j=x_m\partial _j, \quad e_m=-x_m\partial _m, \quad j=1,...,m-1.
$$
The Levi-Civita connection gives, for $i=1,...,m$ and $j=1,...,m-1,$
$$
\widetilde{\nabla}_{e_j}e_{m}=e_j, \quad \widetilde{\nabla}_{e_m}e_m=0, \quad \nabla
_{e_i}e_j=0 \text{ } (i\neq j), \quad \widetilde{\nabla}_{e_j}e_j=-e_m.
$$%
It is direct to see that $e_m$ is an anti-torqued vector field, namely
$$
\widetilde{\nabla}_{X}e_m =X-\langle X,e_{m}\rangle e_m, \quad \forall X\in \mathfrak{X}(\h^m),
$$
where the conformal scalar is identically $1$ \cite{amc}.

\item Let $E$ be the radial vector field on the connected Riemannian manifold $\r^m$ punched at the origin. Then $\frac{1}{|E|}E$ is an anti-torqued vector field on $\r^m\backslash \{0\}$ \cite{dan}.
\end{enumerate}
\end{example}

\section{Anti-Torqued Slant Helices} \label{anti-helices}

Let $V$ be an anti-torqued vector field on an oriented connected $m$-dimensional Riemannian manifold $\widetilde{M}$. Then
\begin{equation*}
\widetilde{\nabla}_X V=\rho (X -\langle X, V\rangle  V), \quad \forall X\in \mathfrak{X}(\widetilde{M}),
\end{equation*}
where $\widetilde{\nabla}$ is the Levi-Civita connection, and $\rho$ is the conformal scalar.

\begin{theorem} \label{anti-slant}
Let $\widetilde{M}$ be a Riemannian manifold endowed with an anti-torqued vector field $V$, and let $\gamma \subset \widetilde{M}$ be a Frenet curve of order $r$, $1\leq r \leq m$. If $\gamma$ is an anti-torqued slant helix with axis $V$, then one the following holds:
\begin{enumerate}
\item[(a)] $\gamma$ is a geodesic in $\widetilde{M}$,

\item[(b)] $\gamma$ is a Frenet curve of order $2$, the conformal scalar of $V$ is the negative of the first curvature of $\gamma$, and $V$ is a concircular vector field along $\gamma$,

\item[(c)] $\gamma$ satisfies the system \eqref{sys-anti}.
\end{enumerate}
\end{theorem}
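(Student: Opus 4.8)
The plan is to start from Definition \ref{def}, differentiate the slant-helix condition along $\gamma$, and then analyze the resulting identity case by case according to the order $r$. Write $T = N_1^\gamma$, $N = N_1^\gamma$ wait — let me use the paper's notation: the principal vector field is $N^\gamma = N_1^\gamma$... actually in Definition \ref{def} the ``unit principal vector field'' and in the Frenet frame $N_1^\gamma$ plays the role of the unit tangent (since $T^\gamma = \kappa_0 N_1^\gamma$ and $\kappa_0 \equiv 1$ for arc-length parametrization). So the condition reads $\langle V, N_1^\gamma\rangle = \cos\theta$. The first step is to apply $\widetilde\nabla_{N_1^\gamma}$ to both sides. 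Using $\widetilde\nabla_{N_1^\gamma} V = \rho(N_1^\gamma - \langle N_1^\gamma, V\rangle V)$ for the anti-torqued field and the Frenet formula $\widetilde\nabla_{N_1^\gamma} N_1^\gamma = \kappa_1 N_2^\gamma$, one gets
$$
0 = \rho\big(1 - \langle N_1^\gamma, V\rangle^2\big) + \kappa_1 \langle V, N_2^\gamma\rangle .
$$
If $r=1$ then $\kappa_1 \equiv 0$, so $\gamma$ is a geodesic, which is case (a). (More precisely, a Frenet curve of order $1$ is by definition a curve with $\kappa_1 \equiv 0$, i.e. a geodesic — so case (a) is already forced; the interesting branch is $r \ge 2$.)

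Next I would treat $r\ge 2$, where $\kappa_1 > 0$, so the displayed identity determines $\langle V, N_2^\gamma\rangle$ in terms of $\rho$, $\kappa_1$ and the constant $a_1 := \langle V, N_1^\gamma\rangle = \cos\theta$. The idea is to iterate: differentiate $\langle V, N_i^\gamma\rangle$ for $i = 2, 3, \ldots$ using the Frenet formulas (c) and the anti-torqued equation, obtaining a recursive system relating the functions $a_i := \langle V, N_i^\gamma\rangle$. For $i=2$:
$$
(\langle V, N_2^\gamma\rangle)' = \langle \widetilde\nabla_{N_1^\gamma} V, N_2^\gamma\rangle + \langle V, \widetilde\nabla_{N_1^\gamma} N_2^\gamma\rangle = -\rho\, a_1 a_2 - \kappa_1 a_1 + \kappa_2 a_3,
$$
and analogously for higher $i$ using $\widetilde\nabla_{N_1^\gamma} N_i^\gamma = -\kappa_{i-1} N_{i-1}^\gamma + \kappa_i N_{i+1}^\gamma$, and $\widetilde\nabla_{N_1^\gamma} N_m^\gamma = -\kappa_{m-1} N_{m-1}^\gamma$ at the top. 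Collecting these equations (together with the algebraic constraint $\sum_i a_i^2 \le 1$, with equality iff $V$ is spanned by the Frenet frame along $\gamma$) is exactly the system \eqref{sys-anti} referenced in case (c). So, generically, (c) holds.

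The remaining task is to isolate the degenerate branch (b), which should come from the case where the iteration collapses at the earliest stage — namely $r = 2$. When $r=2$ the frame along $\gamma$ is $(N_1^\gamma, N_2^\gamma)$ and there is no $N_3^\gamma$; the first differentiated identity becomes $0 = \rho(1 - a_1^2) + \kappa_1 a_2$, and differentiating $a_2 = \langle V, N_2^\gamma\rangle$ with $\widetilde\nabla_{N_1^\gamma} N_2^\gamma = -\kappa_1 N_1^\gamma$ (the top Frenet relation for $m$ effectively $2$ along the curve, or using that the $\kappa_2$-term is absent) gives $a_2' = -\rho a_1 a_2 - \kappa_1 a_1$. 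I expect that combining these two, together with the hypothesis that makes $V$ concircular along $\gamma$ — i.e. $\langle X, V\rangle = 0$ for $X$ tangent, forcing the $\rho\langle N_1^\gamma,V\rangle V$ correction term to vanish in the direction of the frame, which happens precisely when $a_1 = 0$, i.e. $\cos\theta = 0$ — yields $\rho = -\kappa_1$ and $\widetilde\nabla_X V = \rho X$ along $\gamma$, which is the statement ``$V$ is a concircular vector field along $\gamma$''. I would verify that $a_1 = \cos\theta = 0$ forces $a_2 = \pm 1$ from $\sum a_i^2 = 1$ (the frame being complete for a curve), whence $\rho(1-0) + \kappa_1(\pm 1) = 0$ gives $\rho = \mp\kappa_1$, and then $\widetilde\nabla_X V = \rho(X - \langle X,V\rangle V) = \rho X$ on $\gamma$ because $V = \pm N_2^\gamma$ is normal to the curve direction $N_1^\gamma$; this is the concircular condition.

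The main obstacle I anticipate is the bookkeeping needed to argue cleanly that these three cases are genuinely \emph{exhaustive and mutually exclusive up to the stated overlaps}: one must check that whenever $r \ge 3$, or whenever $r = 2$ but $\cos\theta \ne 0$, the curve necessarily falls into branch (c) and cannot collapse into (a) or (b); and conversely that the collapse $\rho = -\kappa_1$ really does force $r = 2$ (so that (b) is not a sub-case of (c) in disguise). This requires carefully tracking when the recursively defined $a_i$ are allowed to vanish and when the constraint $\sum a_i^2 = 1$ (valid since for a curve the Frenet frame spans the whole tangent space of $\widetilde M$ only if $r = m$, otherwise one needs the normal component of $V$ handled separately) becomes an equality. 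I would handle this by a clean induction on $i$, showing that the first index at which the recursion degenerates dictates which of (a), (b), (c) occurs. The rest — writing out \eqref{sys-anti} explicitly and checking each Frenet substitution — is routine differentiation.
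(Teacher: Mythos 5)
There is a genuine gap, and it occurs at the very first step: you have misidentified the quantity that the hypothesis makes constant. In Definition \ref{def} the ``unit principal vector field'' $N^\gamma$ is the principal \emph{normal} $N_2^\gamma$ (see the notation $N_2^\gamma=N^\gamma$, $N_3^\gamma=B^\gamma$ fixed for $m=3$ right after the theorem, and the use of $N^\gamma$ as the principal normal for concircular helices in the introduction), not the unit tangent $N_1^\gamma=T^\gamma$. So the hypothesis is $\langle V,N_2^\gamma\rangle=\cos\theta$, while the tangential component $f_1^\gamma=\langle V,T^\gamma\rangle$ is an unknown, a priori non-constant function --- which is exactly why the first equation of \eqref{sys-anti} contains the derivative $T^\gamma(f_1^\gamma)$. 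Your opening identity $0=\rho\bigl(1-\langle N_1^\gamma,V\rangle^2\bigr)+\kappa_1\langle V,N_2^\gamma\rangle$ is obtained by differentiating $\langle V,T^\gamma\rangle=\mathrm{const}$, which is not a consequence of the hypothesis; the recursion built on it therefore does not reproduce \eqref{sys-anti} but rather characterizes curves making a constant angle with $V$ via the \emph{tangent} (general helices). The identification of branch (b) fails for the same reason: in the paper that branch is the case $V=\pm N_2^\gamma$, i.e.\ $\cos\theta=\pm1$ (not $\cos\theta=0$), where comparing the Frenet relation $\widetilde{\nabla}_{T^\gamma}N_2^\gamma=-\kappa_1T^\gamma+\kappa_2N_3^\gamma$ with the anti-torqued equation $\widetilde{\nabla}_{T^\gamma}V=\rho T^\gamma$ (valid because $V\perp T^\gamma$ there) forces $\kappa_2\equiv0$, hence order $2$, together with $\kappa_1=-\rho$ and concircularity of $V$ along $\gamma$.

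The correct trichotomy is also not by the order $r$ but by the position of $V$ relative to the Frenet frame: (a) $V$ parallel to $T^\gamma$, so $\theta=\pi/2$ and $\gamma$ is an integral curve of $V$, which is a geodesic because an anti-torqued field is a unit geodesic vector field (this is how (a) arises --- not from $r=1$); (b) $V$ parallel to some $N_i^\gamma$, which the Frenet equations force to be $i=2$; (c) $V$ parallel to none of the frame fields, in which case one writes $V=f_1^\gamma T^\gamma+\cos\theta\,N_2^\gamma+\sum_{i\geq3}f_i^\gamma N_i^\gamma$ and equates the two expressions for $\widetilde{\nabla}_{T^\gamma}V$ componentwise to obtain \eqref{sys-anti}. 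Under this split the exhaustiveness issue you worry about disappears. If you rerun your differentiation scheme with the correct constant ($a_2=\cos\theta$ fixed, $a_1$ unknown) it becomes essentially the paper's Case 3 computation; as written, however, the argument addresses a different class of curves.
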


\begin{proof}
Let $\gamma \subset \widetilde{M}$ be a Frenet curve of order $r$, $1\leq r \leq m$, parametrized by arc-length. Denote by $(T^\gamma,N_2^\gamma,...,N_m^\gamma)$ the Frenet frame of $\gamma$, and by $\kappa_i$, for $0<j<m-1$, the curvatures of $\gamma$.

If $\gamma$ is an anti-torqued slant helix with axis $V$, then from Definition \ref{def} it follows
\begin{equation*}
\langle V , N_2^\gamma \rangle = \cos \theta , \quad \theta \in [0,2\pi].
\end{equation*}
We distinguish three cases:

{\bf Case 1.} $V$ is parallel to $T^\gamma$ along $\gamma$. Then $\theta = \frac{\pi}{2}$ (or $\theta = \frac{3\pi}{2}$) and  $V=\pm T^\gamma$. Without lose of generality, we may assume $V=T^\gamma$, meaning that $\gamma$ is the integral curve of $V$. Since $V$ is a unit geodesic vector field on $\widetilde{M}$, $\gamma$ becomes a geodesic in $\widetilde{M}$.

{\bf Case 2.} $V$ is parallel to one of $N_2^\gamma,...,N_m^\gamma$ along $\gamma$. Without lose of generality, we may assume $V=N_i^\gamma$, for $2\leq i \leq m$. Because $V$ is an anti-torqued vector field on $\widetilde{M}$, we have
$$
\widetilde{\nabla}_{T^\gamma} N_i^\gamma= \rho T^\gamma.
$$
On the other hand, the Frenet equations give
$$
\widetilde{\nabla}_{T^\gamma} N_i^\gamma=
\left\{
\begin{array}{cc}
-\kappa_1T^\gamma +\kappa_{2} N_{3}^\gamma,&  i=2,\\
-\kappa_{i-1} N_{i-1}^\gamma +\kappa_{i} N_{i+1}^\gamma,& 2<i<m, \\
-\kappa_{m-1} N_m^\gamma,& i=m.
\end{array}%
\right.
$$
Comparing these expressions, we conclude that the only possible case is $i=2$ and $\kappa_1=-\rho$. Consequently, it follows that $\theta \in \{0,\pi,2\pi\} $ and the set
$$
\{T^\gamma , \widetilde{\nabla}_{T^\gamma} T^\gamma,  \widetilde{\nabla}_{T^\gamma}(\widetilde{\nabla}_{T^\gamma} T^\gamma)  \}
$$
is linearly dependent, which means that $\gamma$ is a Frenet curve of order $2$, and the conformal scalar $\rho$ along $\gamma$ is equal to $-\kappa_1$. Moreover, $V$ is a concircular vector field along $\gamma$.

{\bf Case 3.}
$V$ is parallel to none of $T^\gamma,N_2^\gamma,...,N_m^\gamma$ along $\gamma$. Thus, $V$ admits the following orthogonal decomposition:
\begin{equation}
V= f_1^\gamma T^\gamma +\cos \theta N_2^\gamma +\sum_{i=3}^m f_i^\gamma N_i^\gamma, \label{anti-1}
\end{equation}
where $f_1^\gamma, f_3^\gamma,...,f_m^\gamma$ are smooth functions defined on $I$. Since $V$ is anti-torqued, it follows that
$$
\widetilde{\nabla}_{T^\gamma} V = \rho(T^\gamma-f_1^\gamma V),
$$
where the relation $\langle V,T^\gamma \rangle =f_1^\gamma$ has been used. Expanding this expression yields
$$
\widetilde{\nabla}_{T^\gamma} V = \rho \left ( (1-(f_1^\gamma)^2)T^\gamma -\cos \theta f_1^\gamma  N_2^\gamma -f_1^\gamma \sum_{i=3}^m f_i^\gamma N_i^\gamma\right ).
$$
Moreover, derivating \eqref{anti-1} with respect to $T^{\gamma }$ and applying the Frenet formulas, we conclude that
\begin{eqnarray*}
\widetilde{\nabla}_{T^\gamma} V &=&(T^{\gamma }(f_{1}^{\gamma })-\cos \theta \kappa_{1})T^{\gamma }+(\kappa _{1}f_{1}^{\gamma }-\kappa _{2}f_{3}^{\gamma})N_{2}^{\gamma }
\\
&&+(T^{\gamma }(f_{3}^{\gamma })+\cos \theta \kappa_{2}-\kappa _{3}f_{4}^{\gamma })N_{3}^{\gamma}
\\
&&+\sum_{i=4}^{m-1}(T^{\gamma }(f_{i}^{\gamma })+\kappa _{i-1}f_{i-1}^{\gamma}-\kappa _{i}f_{i+1}^{\gamma })N_{i}^{\gamma }
\\
&&+(T^{\gamma }(f_{m}^{\gamma})+\kappa _{m-1}f_{m-1}^{\gamma })N_{m}^{\gamma }.
\end{eqnarray*}
By equating the tangent and normal components in the last two relations, we obtain
\begin{equation}
\left.
\begin{array}{l}
T^{\gamma }(f_{1}^{\gamma })-\cos \theta \kappa_{1} -\rho  ( 1-(f_1^\gamma)^2)=0, \\
\kappa _{1}f_{1}^{\gamma }-\kappa _{2}f_{3}^{\gamma}+\cos \theta \rho f_1^\gamma =0, \\
T^{\gamma }(f_{3}^{\gamma })+\cos \theta \kappa_{2}-\kappa _{3}f_{4}^{\gamma }+\rho f_1^\gamma f_3^\gamma=0,\\
T^{\gamma }(f_{i}^{\gamma })+\kappa _{i-1}f_{i-1}^{\gamma}-\kappa _{i}f_{i+1}^{\gamma }+\rho f_1^\gamma f_i^\gamma=0, \quad 4\leq i \leq m-1,\\
T^{\gamma }(f_{m}^{\gamma})+\kappa _{m-1}f_{m-1}^{\gamma }+\rho f_1^\gamma f_m^\gamma=0.
\end{array}%
\right.  \label{sys-anti}
\end{equation}
\end{proof}

We now consider the particular case $m=3$. Let $\gamma \subset \widetilde{M}$ be a Frenet curve of order $r$, $1\leq r \leq 3$. In this case, we can adopt the usual notations $\kappa_{1}=\kappa$, $\kappa_{2}=\tau$, and
$$
N_2^{\gamma }=N^{\gamma }, \quad N_3^{\gamma }=B^{\gamma }, \quad f_{1}^{\gamma }=f^{\gamma }, \quad f_{3}^{\gamma }=g^{\gamma }.
$$
Suppose that $\gamma$ is an anti-torqued slant helix with axis $V$. Then, the system \eqref{sys-anti} is given by
\begin{equation}
\left.
\begin{array}{l}
T^{\gamma }(f^{\gamma })-\cos \theta \kappa -\rho  ( 1-(f^\gamma)^2)=0, \\
\kappa f^{\gamma }-\tau g^{\gamma}+\cos \theta \rho f^\gamma =0, \\
T^{\gamma }(g^{\gamma })+\cos \theta \tau+\rho f^\gamma g^\gamma=0.
\end{array}%
\right.  \label{sys-anti-3}
\end{equation}

In the particular case $\theta=\frac{\pi}{2}$ ($\theta=\frac{3\pi}{2}$), we obtain the following result.

\begin{theorem} \label{anti-slant-3d}
Let $\widetilde{M}$ be a $3$-dimensional Riemannian manifold endowed with an anti-torqued vector field $V$, and let $\gamma \subset \widetilde{M}$ be a Frenet curve of order $r$, $1\leq r \leq 3$. If $\gamma$ is an anti-torqued slant helix with axis $V$ and $\theta=\frac{\pi}{2}$ (or $\theta=\frac{3\pi}{2}$), then either $\gamma$ is a geodesic in $\widetilde{M}$ or the ratio of its curvatures is given by
$$
 \frac{\tau}{\kappa}=c\frac{f^\gamma}{\sqrt{1-(f^\gamma)^2}},
$$
where $f^\gamma$ is tangential component of $V$ along $\gamma$, and $c>0$ is a constant.
\end{theorem}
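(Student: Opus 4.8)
The plan is to read the curvature ratio directly off the reduced form of system \eqref{sys-anti-3}. Setting $\cos\theta=0$ in \eqref{sys-anti-3} leaves the three relations
\begin{align*}
T^{\gamma}(f^{\gamma})&=\rho\,(1-(f^{\gamma})^{2}),\\
\kappa f^{\gamma}&=\tau g^{\gamma},\\
T^{\gamma}(g^{\gamma})&=-\rho\,f^{\gamma}g^{\gamma}.
\end{align*}
By the trichotomy in the proof of Theorem~\ref{anti-slant}, Case~1 is exactly the geodesic alternative, while Case~2 cannot occur here, since it would force $\langle V,N^{\gamma}\rangle=1\neq0=\cos\theta$. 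Hence I may assume Case~3 holds, so that $V=f^{\gamma}T^{\gamma}+g^{\gamma}B^{\gamma}$ along $\gamma$ and the three relations above are in force; the order-$2$ subcase ($\tau\equiv0$) forces $f^{\gamma}\equiv0$ from the middle relation and makes the claimed identity trivial, so I focus on the case $\kappa,\tau>0$.

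First I would clear the degeneracies. Recall that an anti-torqued vector field is nowhere zero and of unit length, so $(f^{\gamma})^{2}+(g^{\gamma})^{2}=1$ along $\gamma$; combining this with $\kappa f^{\gamma}=\tau g^{\gamma}$ and the positivity of $\kappa,\tau$ rules out any zero of $f^{\gamma}$ or of $g^{\gamma}$ and gives $(f^{\gamma})^{2}<1$ everywhere. Thus $\sqrt{1-(f^{\gamma})^{2}}$ is a smooth, strictly positive function along $\gamma$, and the middle relation reads $\tfrac{\tau}{\kappa}=\tfrac{f^{\gamma}}{g^{\gamma}}$.

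The heart of the proof is a single differentiation. Using the first and third relations,
\[
T^{\gamma}\!\left(\frac{g^{\gamma}}{\sqrt{1-(f^{\gamma})^{2}}}\right)
=\frac{T^{\gamma}(g^{\gamma})\,(1-(f^{\gamma})^{2})+g^{\gamma}f^{\gamma}\,T^{\gamma}(f^{\gamma})}{\bigl(1-(f^{\gamma})^{2}\bigr)^{3/2}}
=\frac{-\rho f^{\gamma}g^{\gamma}(1-(f^{\gamma})^{2})+\rho f^{\gamma}g^{\gamma}(1-(f^{\gamma})^{2})}{\bigl(1-(f^{\gamma})^{2}\bigr)^{3/2}}=0,
\]
so $g^{\gamma}/\sqrt{1-(f^{\gamma})^{2}}$ is a nonzero constant, say $1/c$. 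Then $\tfrac{\tau}{\kappa}=\tfrac{f^{\gamma}}{g^{\gamma}}=c\,\tfrac{f^{\gamma}}{\sqrt{1-(f^{\gamma})^{2}}}$, and after replacing $c$ by $|c|$ (equivalently, flipping the sign of the binormal field) we may take $c>0$; in fact $(f^{\gamma})^{2}+(g^{\gamma})^{2}=1$ pins down $c=\pm1$, consistent with the statement. The computation itself is routine; the step deserving real care is the degeneracy bookkeeping in the second paragraph, guaranteeing that $\kappa,\tau,f^{\gamma},g^{\gamma}$ are nowhere zero on $I$ and $1-(f^{\gamma})^{2}$ stays strictly positive, so that the quotient $\tau/\kappa$ and the square root are meaningful throughout.
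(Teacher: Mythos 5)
Your proof is correct and follows essentially the same route as the paper: both arguments reduce the system to \eqref{sys-3d}, combine the first and third equations to show $g^\gamma/\sqrt{1-(f^\gamma)^2}$ is constant (the paper integrates $T^\gamma(g^\gamma)/g^\gamma=-f^\gamma T^\gamma(f^\gamma)/(1-(f^\gamma)^2)$, you differentiate the quotient directly), and then substitute into $\kappa f^\gamma=\tau g^\gamma$. Your bookkeeping is in fact slightly more careful than the paper's --- using $|V|=1$ to rule out zeros of $f^\gamma$, $g^\gamma$ and to observe that the constant is pinned to $c=\pm1$, a sharpening the paper does not record.
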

\begin{proof}
Since $\theta=\frac{\pi}{2}$ (or $\theta=\frac{3\pi}{2}$), the system \eqref{sys-anti-3} can be simplified by
\begin{equation}
\left.
\begin{array}{l}
T^{\gamma }(f^{\gamma })=\rho  ( 1-(f^\gamma)^2), \\
\kappa f^{\gamma }=\tau g^{\gamma}, \\
T^{\gamma }(g^{\gamma })=-\rho f^\gamma g^\gamma,
\end{array}%
\right. \label{sys-3d}
\end{equation}
where $f^{\gamma }$ is nonzero function on $I$ because $\rho$ is nowhere zero on $\widetilde{M}$. If $\tau \equiv 0$, then the second equality in \eqref{sys-3d} implies that $\gamma$ is a geodesic in $\widetilde{M}$. From the first and third equations of \eqref{sys-3d}, we get
$$
\frac{T^\gamma (g^\gamma)}{g^\gamma}=-\frac{f^\gamma T^\gamma (f^\gamma)}{1-(f^\gamma)^2}.
$$
A first integration yields
$$
g^{\gamma }=c\sqrt{1-(f^\gamma)^2},
$$
where $c>0$ is a constant. The proof is completed by substituting this expression into the second equality in \eqref{sys-3d}.
\end{proof}

\begin{remark}
Assume that $V$ is a unit parallel transported vector field along a non-geodesic curve $\gamma$ (i.e. the case where $\rho=0$ along $\gamma$) and that $\langle V, T^\gamma \rangle = \cos \theta$, $\theta \in [0,2\pi]$. This condition is equivalent to the relation $\langle V, N^\gamma \rangle =0$. However, if $V$ is an anti-torqued vector field, then the condition $\langle V, T^\gamma \rangle = \cos \theta$ does not imply $\langle V, N^\gamma \rangle =0$. Therefore, Theorem \ref{anti-slant-3d} generalizes the results established in \cite[Theorem A, Theorem 1]{dfds}. More clearly, in the system \eqref{sys-3d}, if $\rho\equiv 0$, then both $f^\gamma$ and $g^\gamma$ become constant, implying that $\frac{\tau}{\kappa}$ is a constant. This means that $\gamma$ is a parallel general helix (see \cite{dfds}) .
\end{remark}

We now present some examples to illustrate these results.

\subsection*{Examples of anti-torqued slant helices}
Let $E $ be the radial vector in the connected Riemannian manifold $\widetilde{M}=\r^m\backslash \{0\}$. Then%
$$
V=\frac{E}{|E| }
$$%
is an anti-torqued vector field defined globally on $\widetilde{M}$, whose conformal scalar is $\rho=\frac{1}{|E|} $. Let $\gamma (s)\subset \widetilde{M}$, $s \in I \subset \r$, be a Frenet curve of order $r$, $1\leq r \leq m$, parametrized by arc-length. Assume that $\gamma (s)$ is a anti-torqued slant helix with axis $V$. Since $V|_{\gamma(s)} =\frac{\gamma(s)}{|\gamma(s)|}$, by Definition \ref{def} we have
\begin{equation}
\langle \gamma(s), N^\gamma  \rangle =\cos \theta |\gamma(s)|, \quad \theta \in [0,2\pi]. \label{ex-41}
\end{equation}

Then we can give the following examples of anti-torqued slant helices in the Euclidean setting.

\begin{example}
Assume that $\theta=\frac{\pi}{2}$ (or $\theta=\frac{3\pi}{2}$), and that $\widetilde{M}=\r^3\backslash \{0\}$. If $\gamma$ is a straight line passing through the origin, then the expression \eqref{ex-41} obviously holds. Moreover, when $r=3$, the expression \eqref{ex-41} characterizes rectifying curves in $\widetilde{M}$. It is known from \cite{crc0} that
$$
\gamma(s)=(s+b)T^\gamma +a B^\gamma, \quad a,b\in \r, \quad a\neq 0,
$$
and consequently,
$$
V|_{\gamma(s)} =f^\gamma(s)T^\gamma (s) + g^\gamma(s) B^\gamma(s),
$$
where
$$
f^\gamma(s)=\frac{s+b}{\sqrt{a^2+(s+b)^2}}, \quad  g^\gamma(s)=\frac{a}{\sqrt{a^2+(s+b)^2}}.
$$
The ratio of curvatures of $\gamma(s)$ is $(s+b)/a$ (see \cite[Theorem 2]{crc0}); this confirms the result in Theorem \ref{anti-slant-3d}.
\end{example}

\begin{example} \label{exm-zero-tors}
Let $\widetilde{M}=\r^3\backslash \{0\}$, and let $(x,y,z)$ denote the Cartesian coordinates. Consider the logarithmic spiral lying in the $xz$-plane (see Figure \ref{fig1})
$$
\gamma (s)=\frac{s}{\sqrt{2}}\left( \cos \left( \log s\right) ,0,\sin \left( \log s\right) \right) , \quad s>0,
$$%
with curvature $\kappa(s)=\frac{1}{s}$. The conformal scalar of the anti-torqued vector field $V=\frac{E}{|E| } $ along $\gamma$ is
$$
\rho|_{\gamma(s)}=\frac{1}{|\gamma(s)|}=\frac{\sqrt{2}}{s}, \quad s>0,
$$
where $E$ is the radial vector field in $\r^3$. A direct calculation shows that the angle between $V$ and $N^\gamma$ along $\gamma$ is $\theta =\frac{3\pi}{4}$ (or $\theta =\frac{7\pi}{4}$), and that
$$
f^\gamma(s)=\frac{1}{\sqrt{2}}, \quad g^\gamma(s)=0.
$$
It is easy to verify that the system \eqref{sys-anti-3} is satisfied.
\begin{figure}[hbtp]
\begin{center}
\includegraphics[width=.3\textwidth]{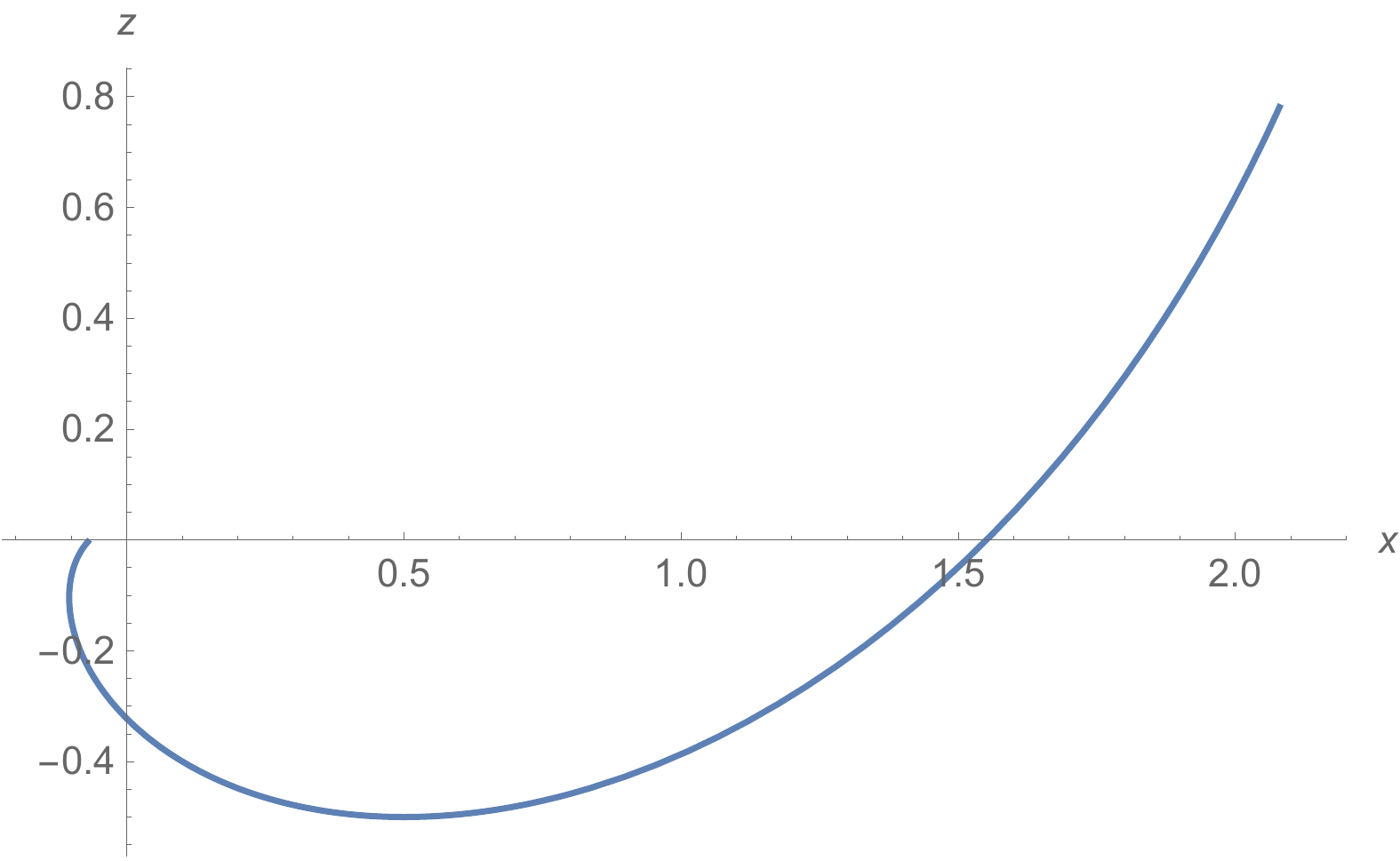} \includegraphics[width=.2\textwidth]{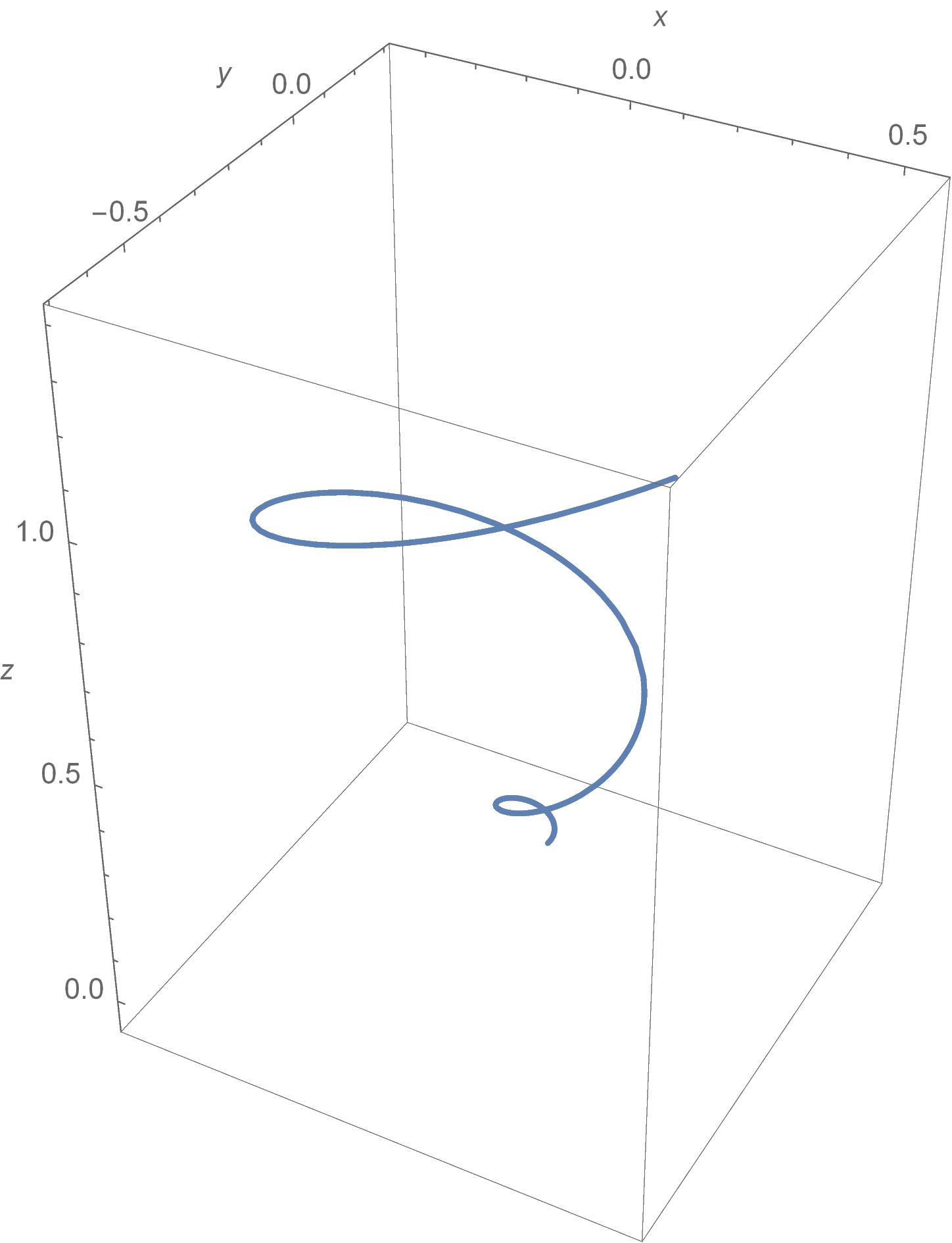}
\end{center}
\caption{Left: Logaritmic spiral as an anti-torqued slant helix. Right: Loxodrome as an anti-torqued slant helix.}\label{fig1}
\end{figure}
\end{example}


\begin{example} \label{exm-nozero-tors}
Let $\widetilde{M}=\r^3\backslash \{0\}$. Consider the loxodrome of a circular cone (see Figure \ref{fig1}), which is also a general helix, given by
$$
\gamma (s)=\frac{s}{4}\left( \cos m(s), \sqrt{3},\sin m(s) \right) , \quad s>0,
$$
where $m(s)=2\sqrt{3}\log \left (\frac{s}{4} \right )$ (see \cite{adb}). The conformal scalar of the anti-torqued vector field $V=\frac{E}{|E| } $ along $\gamma$ is
$$
\rho|_{\gamma(s)}=\frac{1}{|\gamma(s)|}=\frac{1}{s}, \quad s>0,
$$
where $E$ is the radial vector field in $\r^3$. The vector fields $V$ and $N^{\gamma}$ along $\gamma$ intersect at an angle $\theta =\arccos \left ( \frac{-\sqrt{3}}{\sqrt{13}}\right )$. The curvatures of $\gamma$ are given by
$$
\kappa(s)=\frac{\sqrt{39}}{2s}, \quad \tau(s)=-\frac{3}{2s}.
$$
We also compute
$$
f^\gamma=\frac{1}{2}, \quad g^\gamma=\frac{7\sqrt{3}}{4\sqrt{13}}.
$$
We can see that \eqref{sys-anti-3} is satisfied.
\end{example}

After presenting Examples \ref{exm-zero-tors} and \ref{exm-nozero-tors}, we are able to classify anti-torqued slant helices in $\r^3\backslash\{0\}$ with axis $\frac{E}{|E|}$, where $E$ is the radial vector field in $\r^3\backslash\{0\}$.

\begin{theorem} \label{anti-slant2}
Let $E$ be the radial vector field in $\widetilde{M}=\r^3\backslash\{0\}$, and let $\gamma \subset\widetilde{M}$ be a Frenet curve of order $r$, $1\leq r \leq 3$. If $\gamma$ is an anti-torqued slant helix with axis $\frac{E}{|E|}$, then one the following holds:
\begin{enumerate}
\item[(a)]  $\gamma$ is a circle with centered at the origin, lying an affine plane of
$\widetilde{M}$ that passes through the origin,

\item[(b)]  $\gamma$ is a logaritmic spiral lying an affine plane of $\widetilde{M}$,

\item[(c)]  $\gamma$ is a rectifying curve in $\widetilde{M}$,

\item[(d)]  $\gamma$ satisfies Equations \eqref{kappa} and \eqref{tau}.
\end{enumerate}
\end{theorem}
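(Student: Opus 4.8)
The plan is to run the case distinction in the proof of Theorem \ref{anti-slant} in the ambient space $\widetilde M=\r^{3}\setminus\{0\}$ and then resolve the generic alternative by integrating the system \eqref{sys-anti-3}. Write $\ell(s)=|\gamma(s)|$, so that $V|_{\gamma(s)}=\gamma(s)/\ell(s)$ and the conformal scalar along $\gamma$ is $\rho=1/\ell>0$, nowhere zero. Differentiating $\ell^{2}=\langle\gamma,\gamma\rangle$ yields $\ell'=\langle V,T^{\gamma}\rangle=f^{\gamma}$, and $|V|=1$ forces the algebraic identity $(f^{\gamma})^{2}+(g^{\gamma})^{2}=\sin^{2}\theta$; these two relations are the engine of the computation. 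Decomposing $V$ along the Frenet frame of $\gamma$, three cases arise exactly as in the proof of Theorem \ref{anti-slant}: $V$ is tangent to $\gamma$; $V$ is parallel to one of the normal vectors $N_2^{\gamma},N_3^{\gamma}$; or $V$ is in general position, in which case \eqref{sys-anti-3} holds.

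First I would dispose of the two rigid alternatives. If $\gamma$ is a geodesic, then it is an open segment of a straight line and the slant hypothesis forces $V=\pm T^{\gamma}$, so the line passes through the origin; this trivial case is a degenerate rectifying curve, i.e. alternative (c). If $V$ is parallel to some normal, then, comparing $\widetilde{\nabla}_{T^{\gamma}}V=\rho T^{\gamma}$ with the Frenet formulas exactly as in the proof of Theorem \ref{anti-slant} forces the normal to be $N^{\gamma}$; since $\rho=1/\ell>0$, the admissible sign is $V=-N^{\gamma}$, whence $\langle\gamma,T^{\gamma}\rangle\equiv0$, so $(\ell^{2})'=2\langle\gamma,T^{\gamma}\rangle\equiv0$ and $\ell\equiv R$ is constant. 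Thus $\gamma$ is a plane curve lying on the sphere of radius $R$ centred at the origin, hence a circle, and $N^{\gamma}=-\gamma/R$ says that its principal normal is radial, which happens precisely when its centre is the origin; together with $\kappa=\rho=1/R$ this is alternative (a).

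It remains to treat the case where $\gamma$ satisfies \eqref{sys-anti-3} with $\rho=1/\ell$. I would split on $\cos\theta$. If $\cos\theta=0$ (that is $\theta=\pi/2$ or $\theta=3\pi/2$), then $\langle\gamma,N^{\gamma}\rangle=\cos\theta\,\ell\equiv0$, so the position vector of $\gamma$ always lies in its rectifying plane; hence $\gamma$ is congruent to a rectifying curve \cite{crc0}, which is alternative (c), with the ratio $\tau/\kappa$ as recorded in Theorem \ref{anti-slant-3d}. If $\cos\theta\ne0$, I would use $\ell'=f^{\gamma}$ to pass, where possible, to $\ell$ as the independent variable: the first equation of \eqref{sys-anti-3} becomes a first order relation among $f^{\gamma}$, $\ell$ and $\kappa$, the third is a linear ODE for $g^{\gamma}$, and the second equation of \eqref{sys-anti-3} is precisely the compatibility condition that keeps the solution on the surface $(f^{\gamma})^{2}+(g^{\gamma})^{2}=\sin^{2}\theta$, so it fixes $g^{\gamma}$ and, with it, the relation between $\kappa$, $\tau$ and $\ell$. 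When $f^{\gamma}$ is constant everything integrates at once: if in addition $g^{\gamma}\equiv0$ one obtains $\tau\equiv0$ and $\kappa=-\cos\theta/\ell$ with $\ell$ an affine function of $s$, which is a logarithmic spiral with pole at the origin, i.e. alternative (b); in all the remaining sub-cases $\kappa$ and $\tau$ are obtained explicitly, yielding Equations \eqref{kappa} and \eqref{tau}, i.e. alternative (d).

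The main obstacle is this last step: integrating the nonautonomous system \eqref{sys-anti-3}, whose coefficients involve $1/\ell$ while $\ell$ is itself coupled through $\ell'=f^{\gamma}$, and then deciding which pairs $(\kappa,\tau)$ actually occur, so as to peel off the circles, the logarithmic spirals and the rectifying curves from the generic family collected in \eqref{kappa} and \eqref{tau}. The identity $(f^{\gamma})^{2}+(g^{\gamma})^{2}=\sin^{2}\theta$ is what makes the system tractable, reducing the three unknowns $f^{\gamma},g^{\gamma},\ell$ to one effective degree of freedom, and the second equation of \eqref{sys-anti-3} must be differentiated and combined with the other two in order to close the argument and read off the curvature formulas.
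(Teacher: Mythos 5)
Your proposal is correct and follows essentially the same route as the paper: the paper decomposes the position vector as $\gamma=\phi\phi'T^{\gamma}+\cos\theta\,\phi N^{\gamma}+gB^{\gamma}$ with $\phi=|\gamma|$ (which is exactly $\ell$ times your decomposition of $V$, via $\ell'=f^{\gamma}$), treats $\cos\theta=0$ as the rectifying case, splits the rest on $\tau\equiv 0$ versus $\tau\neq 0$ to peel off the circle centred at the origin and the logarithmic spiral, and lands on (d) otherwise. The step you flag as the main obstacle is in fact routine and requires no integration of the coupled system: \eqref{kappa} is just the first equation of the resulting system solved for $\kappa$, and \eqref{tau} follows by solving the second equation for $g$ and substituting into the third.
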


\begin{proof}
Let $V=\frac{E}{|E|}$ be an anti-torqued vector field on $\widetilde{M}=\r^3\backslash\{0\}$, with conformal scalar $\rho=\frac{1}{|E|}$. Assume that $\gamma=\gamma(s)$, $s\in I$, is an anti-torqued slant helix with axis $V$, parametrized by arc-length. Since  $V|_\gamma=\frac{\gamma}{|\gamma|}$, we have
$$
\langle \gamma(s), N^\gamma(s)\rangle=\phi(s)\cos \theta,
$$
where $N^\gamma$ is the principal normal vector field of $\gamma$, and $\phi(s)=|\gamma(s)|$ is the distance function of $\gamma(s)$. If $\theta=\frac{\pi}{2}$ (or $\theta=\frac{3\pi}{2}$), then the above expression characterizes rectifying curves. We will assume that $\theta\notin \{\frac{\pi}{2},\frac{3\pi}{2}\}$. Derivating $\phi(s)$, we obtain
$$
\phi (s)\phi '(s)=\langle \gamma (s),T^{\gamma}(s)\rangle.
$$
Thus, the following orthogonal decomposition of $\gamma(s)$ holds:
\begin{equation}
\gamma (s)=\phi (s)\phi'(s)T^{\gamma }(s)+\cos \theta \phi
(s)N^{\gamma }(s)+g(s)B^{\gamma }(s),  \label{frame}
\end{equation}%
where $g(s)$ is a smooth function on the interval $I$. Derivating Equation \eqref{frame}, we conclude that
\begin{equation}
\left.
\begin{array}{r}
\left( \phi (s)\phi'(s)\right)'-\cos \theta \phi (s)\kappa
(s)-1=0, \\
\left( \cos \theta +\kappa (s)\phi (s)\right) \phi'(s)-g(s)\tau (s)=0,
\\
g^{\prime }(s)+\cos \theta \phi (s)\tau (s)=0.%
\end{array}%
\right.  \label{sys}
\end{equation}
To solve the system \eqref{sys}, we consider two cases:

{\bf Case:} Assume that $\tau (s)=0,$ for every $s \in I$. In this case, \eqref{frame} and \eqref{sys} reduce, respectively, to
\begin{equation}
\gamma (s)=\phi (s)\phi ^{\prime }(s)T^{\gamma }(s)+\cos \theta  \phi
(s)N^{\gamma }(s)  \label{TN}
\end{equation}%
and%
\begin{equation}
\left.
\begin{array}{r}
\left( \phi (s)\phi ^{\prime }(s)\right) ^{\prime }-\cos \theta  \kappa (s)\phi
(s)-1=0, \\
\left( \cos \theta +\kappa (s)\phi (s)\right) \phi ^{\prime }(s)=0.%
\end{array}%
\right.  \label{sys-tau=0}
\end{equation}
If $\phi (s)$ is a positive constant, which is equivalent to saying that $\gamma (s)$ is a circle centered at the origin, then from Equation \eqref{TN}, we have $\cos \theta =\pm 1$.  However, from the first equality of the system \eqref{sys-tau=0}, we deduce $\cos \theta =- 1$. If $\phi' (s)\neq 0$ for every $s,$ then $\cos \theta =-\kappa (s)\phi (s)$. On the other hand, because $\left\langle \gamma (s),\gamma (s)\right\rangle =\phi (s)^{2},$ from
Equation \eqref{TN} we have $\phi'(s)=\pm \sin \theta. $ Integrating this expression, we obtain
$$
\phi (s)=\pm \sin \theta s+c,
$$
for a constant $c.$ Moreover,
$$
\kappa (s)=-\frac{\cos \theta }{\pm \sin \theta s+c},
$$
which corresponds to a logarithmic spiral \cite[p. 138]{gr}.

{\bf Case:} Suppose that $\tau (s)\neq 0$ for every $s \in I.$ From the system \eqref{sys}, we conclude that
\begin{equation}
\kappa (s)=\frac{\left( \phi (s)\phi ^{\prime }(s)\right) ^{\prime }-1}{%
\cos \theta \phi (s)}  \label{kappa}
\end{equation}%
and
\begin{equation}
\tau ^{\prime }(s)=\frac{\tau (s)}{F(s)}\left( F^{\prime }(s)+(\cos \theta ) ^{2}\phi
(s)\tau (s)^{2}\right) ,   \label{tau}
\end{equation}%
where $F(s)=\left( (\cos \theta ) ^{2}+\left( \phi (s)\phi'(s)\right)
^{\prime }-1\right) \phi'(s).$
\end{proof}

The existence of anti-torqued slant helix can also be observed in other ambient spaces, as described below.

\begin{example}
Consider $\gamma$ as a vertical straight line in $\h^m$, which is a geodesic. Then, $\gamma$ is an anti-torqued slant helix with axis $e_m$, where $T^\gamma$ is parallel to the anti-torqued vector field $e_m$ (see also Example \ref{anti-torq-example}).
\end{example}

\begin{example}
Let $(\widetilde{M},\phi ,\xi ,\eta ,g)$ be a $\beta$-Kenmotsu manifold. The vector field $\xi$ is an anti-torqued vector field \cite{dan}. It is well-known that if the dimension of $\widetilde{M}$ is $3$, then for every Legendre curve the unit normal vector field is parallel to $\xi$ (see \cite{ccm}). This means that every Legendre curve is an anti-torqued slant helix with axis $\xi$.
\end{example}

\section{Torqued Curves} \label{torq-curv}

Recall that a torqued vector field $V$ on $\widetilde{M}$ satisfies
\begin{equation*}
\widetilde{\nabla}_X V=\rho X +\omega V, \quad \forall X\in \mathfrak{X}(\widetilde{M}),
\end{equation*}
where $\omega(V)=0$. Denoting by $W$ the dual vector field to $\omega$, we conclude that $\langle V, W \rangle =0$ on $\widetilde{M}$. In particular, when $\omega\equiv 0$, the vector field $V$ reduces to a concircular vector field.

Let $\gamma=\gamma(s)$, $s\in I$, be a Frenet curve in $\widetilde{M}$ of order $1\leq r \leq m$, parametrized by arc-length. As in previous sections, we denote by $(T^\gamma,N_2^\gamma,...,N_m^\gamma)$ the Frenet frame of $\gamma$, and by $\kappa_i$, for $0<j<m-1$, the curvatures of $\gamma$.

We call $\gamma(s)$ a {\it torqued curve} with respect to $V$ if $\langle V|_{\gamma(s)}, N_2^\gamma(s) \rangle =\theta$, with $\theta \in \r$ constant, for every $s\in I$.

\begin{theorem} \label{tor-slant}
Let $\widetilde{M}$ be a Riemannian manifold endowed with a torqued vector field $V$, and let $\gamma \subset \widetilde{M}$ be a Frenet curve of order $r$, $1\leq r \leq m$. If $\gamma$ is a torqued curve with respect to $V$, then one the following holds:
\begin{enumerate}
\item[(a)] $\gamma$ is a geodesic in $\widetilde{M}$, and $V$ is a concircular vector field along $\gamma$,

\item[(b)] $\gamma$ is a Frenet curve of order $2$, the conformal scalar of $V$ is a multiple of the first curvature of $\gamma$, and $V$ is a concircular vector field along $\gamma$,

\item[(c)] $\gamma$ satisfies the system \eqref{sys-tor}.
\end{enumerate}
\end{theorem}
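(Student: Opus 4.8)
The plan is to follow the pattern of the proof of Theorem \ref{anti-slant}, splitting into three cases according to how $V$ sits relative to the Frenet frame $(T^\gamma,N_2^\gamma,\dots,N_m^\gamma)$ of $\gamma$. Throughout, $\gamma$ is taken parametrized by arc length and $\widetilde{\nabla}_{T^\gamma}V=\rho T^\gamma+\omega(T^\gamma)V$; the two features distinguishing this situation from the anti-torqued one are that $V$ need not be unit, so when $V$ is proportional to a frame vector I write $V=\lambda N$ with $\lambda$ a nowhere-zero smooth function on $I$, and that the generating form contributes a free scalar $\omega(T^\gamma)$ in place of the specific $-\rho\langle T^\gamma,V\rangle$ that the anti-torqued relation forces. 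The identity $\omega(V)=0$, characteristic of torqued fields, is the main additional ingredient.

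In Case 1, $V$ is parallel to $T^\gamma$, say $V=\lambda T^\gamma$. Then $\langle V,N_2^\gamma\rangle=0$ forces $\theta=0$, while $\omega(V)=\lambda\,\omega(T^\gamma)=0$ with $\lambda\neq 0$ gives $\omega(T^\gamma)\equiv 0$, so $\widetilde{\nabla}_{T^\gamma}V=\rho T^\gamma$, i.e. $V$ is concircular along $\gamma$. Differentiating $V=\lambda T^\gamma$ via $\widetilde{\nabla}_{T^\gamma}T^\gamma=\kappa_1 N_2^\gamma$ and comparing $N_2^\gamma$-components with $\rho T^\gamma$ forces $\kappa_1\equiv 0$, so $\gamma$ is a geodesic; this is alternative (a). In Case 2, $V=\lambda N_i^\gamma$ for some $2\le i\le m$ with $\lambda$ nowhere zero; expanding $\widetilde{\nabla}_{T^\gamma}V$ with the Frenet formula for $\widetilde{\nabla}_{T^\gamma}N_i^\gamma$ and matching with $\rho T^\gamma+\lambda\,\omega(T^\gamma)N_i^\gamma$ rules out $i>2$ (exactly as in Case 2 of Theorem \ref{anti-slant}, since the surviving $N_j^\gamma$-component would force a positive curvature to vanish) and forces $i=2$, $\kappa_2\equiv 0$; thus $\gamma$ is a Frenet curve of order $2$. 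The $T^\gamma$-equation then reads $\rho=-\lambda\kappa_1$ and the $N_2^\gamma$-equation reads $T^\gamma(\lambda)=\lambda\,\omega(T^\gamma)$; since $\lambda=\langle V,N_2^\gamma\rangle=\theta$ is constant, $\omega(T^\gamma)\equiv 0$, so $\rho=-\theta\kappa_1$ is a (constant) multiple of $\kappa_1$ and $V$ is concircular along $\gamma$, which is alternative (b).

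In Case 3, $V$ is parallel to no frame vector, so it decomposes orthogonally as $V=f_1^\gamma T^\gamma+\theta N_2^\gamma+\sum_{i=3}^m f_i^\gamma N_i^\gamma$ with smooth functions $f_i^\gamma$ on $I$. I would compute $\widetilde{\nabla}_{T^\gamma}V$ in two ways: first as $\rho T^\gamma+\omega(T^\gamma)\bigl(f_1^\gamma T^\gamma+\theta N_2^\gamma+\sum_{i\ge 3}f_i^\gamma N_i^\gamma\bigr)$ from the torqued condition, and second by differentiating the decomposition and applying the Frenet formulas, using $T^\gamma(\theta)=0$. Equating the $T^\gamma$- and $N_j^\gamma$-components of the two expressions yields the system \eqref{sys-tor}, which is alternative (c).

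The routine part is the component bookkeeping in Cases 2 and 3; in shape it is identical to \eqref{sys-anti}, with $-\rho f_1^\gamma$ replaced by the free function $\omega(T^\gamma)$. The only genuinely delicate points are, in Case 2, verifying that parallelism to $N_i^\gamma$ is incompatible with the Frenet order whenever $i>2$, and, in Cases 1 and 2, extracting the concircularity of $V$ along $\gamma$: in Case 1 this comes from $\omega(V)=0$ directly, whereas in Case 2 it is precisely the constancy of $\theta$ that collapses the $N_2^\gamma$-equation to $\omega(T^\gamma)\equiv 0$.
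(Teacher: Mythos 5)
Your proposal follows the paper's proof almost exactly: the same three-case split according to the position of $V$ relative to the Frenet frame, the same elimination of $i>2$ in Case 2 via the vanishing of a positive curvature, and the same component-matching in Case 3. In Case 2 your route to $\omega(T^\gamma)\equiv 0$ — reading the $N_2^\gamma$-equation $T^\gamma(\lambda)=\lambda\,\omega(T^\gamma)$ together with the constancy of $\lambda=\theta\neq 0$ — is actually cleaner than the paper's justification, which simply invokes $\omega(N_i^\gamma)=0$.

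There is one concrete omission in Case 3. The system \eqref{sys-tor} ends with the purely algebraic relation $f_1^\gamma g_1^\gamma +\theta g_2^\gamma +\sum_{i=3}^m f_i^\gamma g_i^\gamma =0$, which does not arise from equating Frenet components of $\widetilde{\nabla}_{T^\gamma}V$; it is the expansion of $\langle V,W\rangle =0$ after also decomposing the generative vector field as $W=g_1^\gamma T^\gamma+\sum_{i=2}^m g_i^\gamma N_i^\gamma$. You never introduce this decomposition of $W$, so your argument as written produces only the first $m$ equations of \eqref{sys-tor}. Relatedly, the paper writes the torqued condition along $\gamma$ as $\widetilde{\nabla}_{T^\gamma}V=\rho(T^\gamma+g_1^\gamma V)$, so its $\omega$-terms all carry a factor $\rho g_1^\gamma$ where you have $\omega(T^\gamma)=\langle W,T^\gamma\rangle=g_1^\gamma$; with the paper's stated definition of $W$ as the metric dual of $\omega$ your coefficient is the internally consistent one, but to land on \eqref{sys-tor} exactly as printed you must adopt the paper's normalization. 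Both points are easily repaired and do not affect the case analysis.
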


\begin{proof}
If $\gamma(s)$, $s\in I$, is a torqued curve with respect to $V$, then it follows that
\begin{equation*}
\langle V , N_2^\gamma \rangle = \theta , \quad \theta \in \r.
\end{equation*}
We consider three cases:

{\bf Case 1.} $V$ is parallel to $T^\gamma$ along $\gamma$. Then $\theta =0 $ and  $V=f^\gamma T^\gamma$ with $f^\gamma$ a smooth function on $I$. Since $\omega(T)=0$, we have $\widetilde{\nabla}_{T^\gamma}V=\rho T^\gamma$. On the other hand, differentiating $V=f^\gamma T^\gamma$ along $\gamma$ gives
$$
\widetilde{\nabla}_{T^\gamma}V=T^\gamma(f^\gamma)T^\gamma+f^\gamma \kappa_1 N_2^\gamma.
$$
Comparing these expressions, we deduce that $T^\gamma(f^\gamma)=\rho$ and $\kappa_1\equiv 0$, because $V$ is nowhere zero on $\widetilde{M}$.

{\bf Case 2.} $V$ is parallel to one of $N_2^\gamma,...,N_m^\gamma$ along $\gamma$. Without lose of generality, we may assume $V=f_i^\gamma N_i^\gamma$, $2\leq i \leq m$, for a smooth function $f_i^\gamma$ on $I$. Because $\omega(N_i^\gamma)=0$, we have
$$
\widetilde{\nabla}_{T^\gamma} (f_i^\gamma N_i^\gamma)= \rho T^\gamma.
$$
On the other hand, the Frenet equations imply that
$$
\widetilde{\nabla}_{T^\gamma} (f_i^\gamma N_i^\gamma)=
\left\{
\begin{array}{cc}
T^\gamma(f_2^\gamma)N_2^\gamma-f_2^\gamma\kappa_1T^\gamma +f_2^\gamma\kappa_{2} N_{3}^\gamma,&  i=2,\\
T^\gamma(f_i^\gamma)N_i^\gamma-f_i^\gamma\kappa_{i-1} N_{i-1}^\gamma +f_i^\gamma\kappa_{i} N_{i+1}^\gamma,& 2<i<m, \\
T^\gamma(f_m^\gamma)N_i^\gamma-f_m^\gamma\kappa_{m-1} N_m^\gamma,& i=m.
\end{array}%
\right.
$$
Similar to the proof of Theorem \ref{anti-slant}, comparing these expressions, we conclude that it must be $i=2$, $\kappa_1f_2^\gamma=-\rho$, $\kappa_2\equiv0$, and $T^\gamma(f_2^\gamma)=0$. Moreover, $V$ is a concircular vector field along $\gamma$.

{\bf Case 3.}
$V$ is parallel to none of $T^\gamma,N_2^\gamma,...,N_m^\gamma$ along $\gamma$. Let $W$ be generative of $V$. Thus, there exist smooth functions $f_1^\gamma,f_3^\gamma,...,f_m^\gamma$ and $g_1^\gamma,g_2^\gamma,...,g_m^\gamma$ defined on $I$ such that
\begin{equation}
V= f_1^\gamma T^\gamma + \theta N_2^\gamma +\sum_{i=3}^m f_i^\gamma N_i^\gamma, \label{torq-1}
\end{equation}
and
\begin{equation}
W= g_1^\gamma T^\gamma + \sum_{i=2}^m g_i^\gamma N_i^\gamma. \label{torq-2}
\end{equation}
Since $V$ is a torqued vector field, we have
$$
\widetilde{\nabla}_{T^\gamma} V = \rho(T^\gamma+g_1^\gamma V).
$$
Expanding this expression yields
$$
\widetilde{\nabla}_{T^\gamma} V = \rho \left ( (1+f_1^\gamma g_1^\gamma)T^\gamma + \theta  g_1^\gamma  N_2^\gamma +g_1^\gamma \sum_{i=3}^m f_i^\gamma N_i^\gamma\right ).
$$
Derivating \eqref{torq-1} with respect to $T^{\gamma }$, we get
\begin{eqnarray*}
\widetilde{\nabla}_{T^\gamma} V &=&(T^{\gamma }(f_{1}^{\gamma })- \theta \kappa_{1})T^{\gamma }+(\kappa _{1}f_{1}^{\gamma }-\kappa _{2}f_{3}^{\gamma})N_{2}^{\gamma }
\\
&&+(T^{\gamma }(f_{3}^{\gamma })+ \theta \kappa_{2}-\kappa _{3}f_{4}^{\gamma })N_{3}^{\gamma}
\\
&&+\sum_{i=4}^{m-1}(T^{\gamma }(f_{i}^{\gamma })+\kappa _{i-1}f_{i-1}^{\gamma}-\kappa _{i}f_{i+1}^{\gamma })N_{i}^{\gamma }
\\
&&+(T^{\gamma }(f_{m}^{\gamma})+\kappa _{m-1}f_{m-1}^{\gamma })N_{m}^{\gamma }.
\end{eqnarray*}
From the last two relations and $\langle V,W \rangle =0$, we obtain
\begin{equation}
\left.
\begin{array}{l}
T^{\gamma }(f_{1}^{\gamma })- \theta \kappa_{1} -\rho  ( 1+f_1^\gamma g_1^\gamma)=0, \\
\kappa _{1}f_{1}^{\gamma }-\kappa _{2}f_{3}^{\gamma}+\theta \rho g_1^\gamma =0, \\
T^{\gamma }(f_{3}^{\gamma })+\theta \kappa_{2}-\kappa _{3}f_{4}^{\gamma }+\rho f_3^\gamma g_1^\gamma=0,\\
T^{\gamma }(f_{i}^{\gamma })+\kappa _{i-1}f_{i-1}^{\gamma}-\kappa _{i}f_{i+1}^{\gamma }+\rho  f_i^\gamma g_1^\gamma=0, \quad 4\leq i \leq m-1,\\
T^{\gamma }(f_{m}^{\gamma})+\kappa _{m-1}f_{m-1}^{\gamma }+\rho  f_m^\gamma g_1^\gamma=0, \\
f_1^\gamma g_1^\gamma +\theta g_2^\gamma +\sum_{i=3}^m f_i^\gamma g_i^\gamma =0.
\end{array}%
\right.  \label{sys-tor}
\end{equation}
\end{proof}

As in Section \ref{anti-helices}, we consider the particular case $m=3$ and adopt the usual notations. If $\gamma$ is a torqued curve with respect to $V$, then the system \eqref{sys-tor} reduces to
\begin{equation}
\left.
\begin{array}{l}
T^{\gamma }(f^{\gamma }_1)- \theta \kappa -\rho  ( 1+f_1^\gamma g_1^\gamma)=0, \\
\kappa f_1^\gamma-\tau f_3^\gamma+ \theta \rho g_1^\gamma =0, \\
T^{\gamma }(f_3^\gamma)+\theta \tau+\rho f_3^\gamma g_1^\gamma =0, \\
f_1^\gamma g_1^\gamma +\theta g_2^\gamma +f_3^\gamma g_3^\gamma =0.
\end{array}%
\right.  \label{sys-tor-3}
\end{equation}

Suppose that $V$ is a concircular vector field on $\widetilde{M}$. This is equivalent to assuming that $g_i^\gamma \equiv 0$, $i=1,2,3$, in the system \eqref{sys-tor-3}. Consequently, we have
\begin{equation}
\left.
\begin{array}{l}
T^{\gamma }(f^{\gamma }_1)= \theta \kappa +\rho, \\
\kappa f_1^\gamma=\tau f_3^\gamma, \\
T^{\gamma }(f_3^\gamma)=-\theta \tau,
\end{array}%
\right.  \label{sys-tor-31}
\end{equation}
where $f_1^\gamma f_3^\gamma $ is nowhere zero in $I$. From the second equality in \eqref{sys-tor-31}, we write
$$
\frac{f^{\gamma }_1}{f^{\gamma }_3}=\frac{\tau}{\kappa}.
$$
Set $\phi:=\frac{\tau}{\kappa}$. Derivating this expression and using the first and third equalities in \eqref{sys-tor-31}, we obtain
\begin{equation}
T^{\gamma }(\phi )=\frac{1}{f^{\gamma }_3}(\theta\kappa (1 +\phi^2)+\rho ).
\label{conc-0}
\end{equation}
If $T^{\gamma }(\phi )$ is nowhere zero in $I$, then by the third equality in \eqref{sys-tor-31}, we obtain
\begin{equation}
\theta T^{\gamma }\left (\frac{\kappa (1 +\phi^2)}{T^{\gamma }(\phi )} \right )+T^{\gamma }\left (\frac{\rho}{T^{\gamma }(\phi )} \right )+\theta \kappa\phi =0. \label{conc-1}
\end{equation}
This ODE characterizes concircular curves in arbitrary Riemannian manifolds, and thus, we can state the following result.
\begin{theorem}\label{conc}
Let $\widetilde{M}$ be a Riemannian manifold endowed with a concircular vector field $V$, and let $\gamma \subset \widetilde{M}$ be a Frenet curve of order $3$. If $\gamma$ is a concircular curve with respect to $V$ and $T^{\gamma }(\phi )\neq 0$, then $\gamma$ satisfies \eqref{conc-1}.
\end{theorem}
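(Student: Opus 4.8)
The plan is to obtain Theorem~\ref{conc} as a specialization of Theorem~\ref{tor-slant} to a concircular axis, followed by the elimination already outlined in the paragraph preceding the statement.

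First I would note that, since $\gamma$ is a Frenet curve of order $3$, both curvatures $\kappa=\kappa_1$ and $\tau=\kappa_2$ are positive; in particular $\gamma$ is neither a geodesic nor a Frenet curve of order $2$, so alternatives (a) and (b) of Theorem~\ref{tor-slant} are excluded and $\gamma$ must satisfy the system~\eqref{sys-tor}, which for $m=3$ reads~\eqref{sys-tor-3}. Because the axis $V$ is concircular, its generating form $\omega$ vanishes identically, hence the dual vector field $W$ is zero along $\gamma$ and $g_1^\gamma=g_2^\gamma=g_3^\gamma\equiv 0$; this collapses \eqref{sys-tor-3} to \eqref{sys-tor-31}. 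At this stage I would record that $f_1^\gamma f_3^\gamma$ is nowhere zero on $I$: if $f_3^\gamma$ vanished at some point, the second equation of \eqref{sys-tor-31} together with $\kappa>0$ would force $f_1^\gamma$ to vanish there too, so $V$ would be parallel to $N_2^\gamma$ at that point, contradicting the Case~3 hypothesis underlying~\eqref{sys-tor}. Consequently $\phi=\tau/\kappa=f_1^\gamma/f_3^\gamma$ is a well-defined smooth function on $I$.

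The computational heart is then brief. Differentiating $\phi=f_1^\gamma/f_3^\gamma$ along $T^\gamma$, substituting $T^\gamma(f_1^\gamma)=\theta\kappa+\rho$ and $T^\gamma(f_3^\gamma)=-\theta\tau$ from \eqref{sys-tor-31}, and using $f_1^\gamma=\phi f_3^\gamma$ and $\tau=\phi\kappa$ to gather everything over $f_3^\gamma$, I obtain \eqref{conc-0}, i.e. $f_3^\gamma=\bigl(\theta\kappa(1+\phi^2)+\rho\bigr)/T^\gamma(\phi)$; the hypothesis $T^\gamma(\phi)\neq 0$ is precisely what makes this division legitimate. Applying $T^\gamma$ to this identity, splitting the right-hand side as $\theta\,\kappa(1+\phi^2)/T^\gamma(\phi)+\rho/T^\gamma(\phi)$, and replacing $T^\gamma(f_3^\gamma)$ on the left by $-\theta\tau=-\theta\kappa\phi$ (the third equation of \eqref{sys-tor-31} with $\tau=\phi\kappa$) yields exactly \eqref{conc-1}, which is the assertion.

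I do not anticipate a genuine obstacle: once Theorem~\ref{tor-slant} is available, the argument is essentially bookkeeping together with a one-line differentiation. The only point that needs care is the justification that $f_3^\gamma$, equivalently $f_1^\gamma$, never vanishes --- every subsequent division depends on it --- and this is where one must appeal to the Case~3 hypothesis that $V$ is nowhere parallel to a single Frenet vector along $\gamma$, rather than merely to the order of the curve.
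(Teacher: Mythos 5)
Your proposal is correct and follows essentially the same route as the paper: specialize the system \eqref{sys-tor-3} to a concircular axis to get \eqref{sys-tor-31}, set $\phi=\tau/\kappa=f_1^\gamma/f_3^\gamma$, differentiate to reach \eqref{conc-0}, and then eliminate $f_3^\gamma$ using $T^\gamma(f_3^\gamma)=-\theta\tau$ to obtain \eqref{conc-1}. Your explicit justification that $f_1^\gamma f_3^\gamma$ is nowhere zero (via the second equation of \eqref{sys-tor-31} and the Case~3 hypothesis) is a small but welcome addition, since the paper merely asserts this fact.
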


Remark that if $\widetilde{M}=\r^3$, then the concircular factor $\rho$ of $V$ is a constant. Consequently, Theorem \ref{conc} reduces to \cite[Theorem 2.4]{loy4}.

\section*{Acknowledgments}
This study was supported by Scientific and Technological Research Council of Turkey (TUBITAK) under the Grant Number (123F451). The authors thank to TUBITAK for their supports.

\section*{Declarations}

\begin{itemize}
\item Conflict of interest: The authors declare no conflict of interest.

\item Ethics approval: Not applicable.

\item Availability of data and materials: Not applicable.

\item Code availability: Not applicable.

\item Authors’ contributions: Each author contributes equally to the study.
\end{itemize}


\medskip

 \end{document}